\newcommand{\de}{\delta}
\newcommand{\la}{\lambda}
\newcommand{\vv}{\varphi}
\newcommand{\iy}{\infty}
\newcommand{\bu}{\bullet}
\newtheorem{thm}{Theorem}
\newtheorem{lem}{Lemma}
\newtheorem*{lem3}{Lemma 3$^*$}
\begin{document}

\begin{center}
{\large\bf
An inverse problem for the matrix quadratic pencil

on a finite interval}
\\[0.2cm]
{\bf Natalia Bondarenko} \\[0.2cm]
\end{center}

\vspace{0.5cm}

{\bf Abstract.} We consider a quadratic matrix boundary value problem with
equations and boundary conditions dependent on a spectral parameter. 
We study an inverse problem that consists in recovering the differential pencil 
by the so--called Weyl matrix. We obtain asymptotic formulas for the solutions of the considered
matrix equation. Using the ideas of the method of spectral mappings, we prove
the uniqueness theorem for this inverse problem.

\medskip

{\bf Keywords.} Matrix quadratic differential pencils, Weyl matrix,
inverse spectral problems, method of spectral mappings. 

\vspace{1cm}

{\bf 1. Introduction and main results} \\

In this paper, we consider the boundary value problem $L = L(\ell, U, V)$ for the equation
\begin{equation} \label{eqv}
    \ell Y := Y'' + (\rho^2 \cdot I + 2 i \rho Q_1(x) + Q_0(x)) Y = 0, \quad x \in (0, \pi), 
\end{equation}
with the boundary conditions
\begin{equation} \label{BC}
    \begin{array}{l}
        U(Y) := Y'(0) + (i \rho h_1 + h_0) Y(0) = 0, \\
        V(Y) := Y'(\pi) + (i \rho H_1 + H_0) Y(\pi) = 0.
    \end{array}
\end{equation}
Here $Y(x) = [y_k(x)]_{k = \overline{1, m}}$ is a column vector, $\rho$ is the spectral parameter,
$I$ is the $m \times m$ unit matrix,
$Q_s(x) = [Q_{s, jk}(x)]_{j,k = \overline{1, m}}$ are $m \times m$ matrices with entries
$Q_{s, jk}(x) \in W_1^s[0, \pi]$, $s = 0, 1$
$h_s = [h_{s, j k}]_{j, k = \overline{1, m}}$, 
$H_s = [H_{s, j k}]_{j, k = \overline{1, m}}$,
where $h_{s, j k}$, $H_{s, j k}$ are complex numbers.

We assume that $\det (I \pm h_1) \ne 0$
and $\det (I \pm H_1) \ne 0$. This condition excludes problems of Regge type (see \cite{Yur84}) from consideration,
as they require a separate investigation.

Differential equations with nonlinear dependence on the spectral parameter,
or with so-called ``energy--dependent'' coefficients, frequently appear in
mathematics and applications (see \cite{Kel71, KS83, Shkal83, Markus86, Yur97} and references therein).
In particular, inverse problems for such equations arise in investigation of mathematical models
describing collisions of relativistic spinless particles \cite{JJ72} or
proper vibrations of a string with viscous drag \cite{Yam90}.

In this paper, we investigate the inverse problem for the pencil $L$,
which consists in recovering coefficients of the boundary value problem 
\eqref{eqv}, \eqref{BC} by its spectral characteristics.
In the scalar case ($m = 1$) inverse problems for quadratic pencils were studied
in works \cite{GG81, Yur00, BU06, BU12, HP12, Pron12}.

In the particular case when $Q_1(x) \equiv 0$, equation \eqref{eqv}
becomes the matrix Sturm--Liouville equation. 
In recent years, significant progress has been made in the inverse problems theory for this equation.
Constructive algorithms for solution of inverse problems were suggested, and characterization of spectral data
was given (see \cite{Yur06, CK09, MT10, Bond11, Bond12}). 

Now we plan to use ideas, developed for
the matrix Sturm--Liouville equation, for problems with nonlinear dependence on the spectral parameter.
In the present paper, we consider a general situation, without any conditions of selfadjointness on the coefficients
and with the spectral parameter in the boundary conditions. We study the inverse problem 
for the pencil $L$ by so--called Weyl matrix and prove the uniqueness theorem for the solution of this problem.
For our investigation, we develop ideas of the method of spectral mappings \cite{FY01, Yur02}.
This method also can be used to obtain a constructive procedure for the solution
of this inverse problem, but this question requires separate investigation. 

One of the main difficulties in problems for differential pencils is related to asymptotic behavior of solutions.
The main terms of asymptotic representations depend on the coefficients of the pencil $Q_1$, $h_1$, $H_1$.
Derivation of asymptotic formulas is nontrivial even in the scalar case, and in the matrix case it is more complicated,
because of additional difficulties connected with noncommutativity of matrix multiplication.
We obtain the asymptotics for the fundamental system of solutions for equation \eqref{eqv}. They are required to
prove uniqueness, but they also can be considered as a separate result.
 
Now proceed to the formulation of the main result.
Let $\Phi(x, \rho) = [\Phi_{jk}(x, \rho)]_{j, k = \overline{1, m}}$ 
be the matrix solution of equation \eqref{eqv} satisfying the conditions
$U(\Phi) = I$, $V(\Phi) = 0$. We call $\Phi(x, \rho)$ the
\textit{Weyl solution} for $L$. Put $M(\rho) := \Phi(0, \rho)$. 
The matrix $M(\rho) = [M_{jk}(\rho)]_{j,k = \overline{1, m}}$ is called
the \textit{Weyl matrix} for $L$. The notion of the Weyl matrix
is a generalization of the notion of the Weyl function ($m$-function)
for the scalar case (see \cite{Mar77, FY01}) and the notion of the 
Weyl matrix for the matrix Sturm--Liouville operator (see \cite{Bond11, Bond12}).

The inverse problem is stated as follows.

\medskip

{\bf Inverse Problem 1.} {\it Given a Weyl matrix $M(\rho)$, construct the coefficients of 
the pencil $L$.}

\medskip

{\it Remark.} One can prove in a standard way that
the boundary problem $L$ has a countable set of eigenvalues $\{ \rho_n \}$.   
The entries of $M(\rho)$ are meromorphic in $\rho$
and their poles coincide with $\{\rho_n\}$. As in the scalar case (see \cite{BU06}),
the following representation is valid
\begin{equation} \label{Mseries}
 	M(\rho) = \sum_n \sum_{\nu = 1}^{m_n} \frac{M_{n \nu}}{(\rho - \rho_n)^{\nu}},
\end{equation}
where $m_n$ are multiplicities of the corresponding eigenvalues $\rho_n$, and
$M_{n \nu}$ are some matrix coefficients. 

Following \cite{BU06, BU12}, we call the collection $\{ \rho_n, M_{n \nu}\}$ the {\it spectral data}
of the pencil $L$. 
By virtue of \eqref{Mseries}, the spectral 
data determine the Weyl matrix uniquely.
Therefore, Inverse Problem~1 is equivalent to

\medskip

{\bf Inverse Problem 2.} {Given spectral data $\{ \rho_n, M_{n \nu} \}$, 
construct the coefficients of the pencil $L$.}

\medskip

In this paper, we restrict ourselves to Inverse Problem~1.

Along with $L$ we consider a pencil $\tilde L$ of the same form but with 
other coefficients $\tilde Q_s(x)$, $\tilde h_s$, $\tilde H_s$. We agree that if a symbol 
$\gamma$ denotes an object related to $L$ then $\tilde \gamma$
denotes the corresponding object related to $\tilde L$.

We now state the uniqueness theorem for Inverse Problem 1.

\begin{thm}               	
If $M(\rho) = \tilde M(\rho)$, then $L = \tilde L$.
Hence the Weyl matrix determines the coefficients of the pencil \eqref{eqv}, \eqref{BC} uniquely.
\end{thm}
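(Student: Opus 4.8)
The plan is to follow the classical scheme of the method of spectral mappings, adapted to the matrix quadratic pencil. First I would introduce the fundamental system of solutions of \eqref{eqv}: the matrix solution $C(x, \rho)$ with $C(0,\rho) = I$, $C'(0,\rho) = -(i\rho h_1 + h_0)$ (so that $U(C) = 0$) together with a second solution normalized at $x = 0$, and use the asymptotic formulas announced earlier in the paper for these solutions and their derivatives. From these one builds the Weyl solution $\Phi(x,\rho)$ explicitly: writing $\Phi(x,\rho) = S(x,\rho) + C(x,\rho)M(\rho)$ where $S$ is the solution with $U(S) = I$, the condition $V(\Phi) = 0$ determines $M(\rho) = \Phi(0,\rho)$, and one obtains $\Phi$ in terms of the characteristic function $\Delta(\rho) := V(C)$ (an $m\times m$ matrix whose zeros, counted with multiplicity, are the eigenvalues $\rho_n$). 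One also needs the other Weyl-type solution normalized at the right endpoint and the bilinear/Wronskian relations among these solutions; in particular $\langle \Phi(x,\rho), C(x,\rho)\rangle \equiv I$ (a suitably defined matrix Wronskian), which will be used to control products of solutions.

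Next I would form the standard "spectral mapping" matrices. Set $P_1(x,\rho) = \Phi(x,\rho)\tilde C'(x,\rho) - \Phi'(x,\rho)\tilde C(x,\rho)$ type combinations — more precisely the $2m \times 2m$ matrix $P(x,\rho)$ built from $\Phi, C$ for $L$ and $\tilde\Phi, \tilde C$ for $\tilde L$ — arranged so that, using the Wronskian identities, each block $P_{jk}(x,\rho)$ is entire in $\rho$ for every fixed $x$. The assumption $M(\rho) = \tilde M(\rho)$ makes the singular parts cancel: all potential poles of $P_{jk}$ at the points $\rho_n$ are removable, so $P_{jk}(x,\cdot)$ is entire. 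Then I would use the asymptotic estimates for $C, \Phi, \tilde C, \tilde\Phi$ (valid in the whole $\rho$-plane with the exponential factors $\exp(\pm|\mathrm{Im}\,\rho|\,x)$ and the $h_1$, $H_1$-dependent corrections) to show that $P_{11}(x,\rho) \to I$ and $P_{12}(x,\rho) \to 0$ as $|\rho| \to \infty$ uniformly in $x$. By Liouville's theorem $P_{11}(x,\rho) \equiv I$, $P_{12}(x,\rho) \equiv 0$, i.e. $P$ is independent of $\rho$ and equal to the identity in the relevant blocks.

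Unraveling $P \equiv \mathrm{const}$ gives the key identities $\Phi(x,\rho) \equiv \tilde\Phi(x,\rho)$ and $C(x,\rho) \equiv \tilde C(x,\rho)$ for all $x \in [0,\pi]$ and all $\rho$. From $C \equiv \tilde C$, substituting into equation \eqref{eqv} for both $L$ and $\tilde L$ and subtracting, one gets $(2i\rho(Q_1 - \tilde Q_1) + (Q_0 - \tilde Q_0))C(x,\rho) = 0$; since $C(x,\rho)$ is invertible for large $|\rho|$ and the bracket is a polynomial in $\rho$ with matrix coefficients independent of $\rho$, comparing powers of $\rho$ yields $Q_1 \equiv \tilde Q_1$ and $Q_0 \equiv \tilde Q_0$. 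The initial conditions $C'(0,\rho) = -(i\rho h_1 + h_0)$ together with $C \equiv \tilde C$ give $h_1 = \tilde h_1$, $h_0 = \tilde h_0$; and $V(C) = V(\tilde C)$ with $C \equiv \tilde C$ (evaluating $C, C'$ at $x = \pi$ for a suitable $\rho$) gives $H_1 = \tilde H_1$, $H_0 = \tilde H_0$. Hence $L = \tilde L$.

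The main obstacle I expect is the second step: proving that the blocks $P_{jk}(x,\rho)$ are genuinely entire and that they have the required limits at infinity uniformly in $x$. This rests entirely on the sharp asymptotic formulas for the fundamental system — including the nontrivial $h_1, H_1, Q_1$-dependent main terms — and on verifying that the matrix Wronskian relations behave well despite noncommutativity; in the matrix setting one must be careful that the combinations defining $P$ are ordered so that the singular parts of $M$ and $\tilde M$ actually cancel and so that the Wronskian identities apply. Once the entirety and the asymptotics are in hand, the Liouville argument and the subsequent extraction of the coefficients are routine.
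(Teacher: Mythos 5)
Your overall scheme (spectral-mapping matrix built from $\varphi,\Phi$ and $\tilde\varphi,\tilde\Phi$, entirety from $M=\tilde M$, Liouville, then read off the coefficients) is the same as the paper's. But there is a genuine gap at the step you call routine: in the pencil case $\mathcal{P}_{11}(x,\rho)$ does \emph{not} tend to $I$ as $|\rho|\to\infty$. Because the leading terms of the asymptotics of $\varphi,\Phi,\varphi^*,\Phi^*$ carry the $Q_1$-dependent factors $P_{\pm}(x)$, $\tilde P^*_{\pm}(x)$ (solutions of $P_{\pm}'=\pm Q_1P_{\pm}$, etc.), the sectorial limits are
$$
\mathcal{P}_{11}(x,\rho)=\Omega(x)+O(\rho^{-1}),\qquad
\Omega(x)=\tfrac12\bigl(P_{-}(x)\tilde P^*_{+}(x)+P_{+}(x)\tilde P^*_{-}(x)\bigr),
$$
and $\Omega(x)$ depends on the two \emph{unknown} potentials $Q_1,\tilde Q_1$; it equals $I$ only after one already knows $Q_1=\tilde Q_1$. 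So Liouville only yields $\mathcal{P}_{11}(x,\rho)\equiv\Omega(x)$ and $\mathcal{P}_{12}(x,\rho)\equiv\rho^{-1}\Lambda(x)$, hence $\Lambda(x)\equiv 0$ with $\Lambda(x)=\tfrac1{2i}(P_-\tilde P^*_+-P_+\tilde P^*_-)$ — and an extra argument is needed to upgrade $\Omega\equiv\mathrm{const}$ to $\Omega\equiv I$. The paper does this by differentiating: $\Omega'(x)=i(\Lambda(x)\tilde Q_1(x)-Q_1(x)\Lambda(x))=0$, so $\Omega(x)\equiv\Omega(0)=I$. Without this step (and without first extracting $h_1=\tilde h_1$ from the asymptotics of $M(\rho)$ so that the leading terms combine at all), your conclusion $\varphi\equiv\tilde\varphi$, $\Phi\equiv\tilde\Phi$ does not follow. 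This is precisely the new difficulty of the pencil case versus the Sturm--Liouville case, and your proposal assumes it away.

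A secondary point: the noncommutativity obstacle you flag at the end is real and is not resolved by ``ordering the combinations carefully'' with $\tilde C,\tilde\Phi$ alone. The paper inverts the $2m\times 2m$ fundamental matrix by introducing the transposed pencil $\ell^*Z=Z''+Z(\rho^2I+2i\rho Q_1+Q_0)$ acting on row vectors, its solutions $\varphi^*,\Phi^*$, and the identity $M^*(\rho)\equiv M(\rho)$; the blocks are then $\mathcal{P}_{j1}=\varphi^{(j-1)}\tilde\Phi^{*\prime}-\Phi^{(j-1)}\tilde\varphi^{*\prime}$ and $\mathcal{P}_{j2}=\Phi^{(j-1)}\tilde\varphi^{*}-\varphi^{(j-1)}\tilde\Phi^{*}$, whose entirety follows from $\tilde M^*\equiv\tilde M\equiv M$. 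Also note the asymptotics hold only in the sectors $\Theta^{\pm}_{\delta}$, so the final step is a Phragm\'en--Lindel\"of-type argument for entire functions of order $\le 1$, not plain Liouville with a global bound.
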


\medskip

In order to prove Theorem~1, we need asymptotics for the solutions of equation \eqref{eqv}.
They are obtained in Section 2. In Section 3, we provide the proof of the uniqueness theorem.

\bigskip

{\bf 2. Asympotic behavior of the solutions}\\

Let $C(x, \rho) = [C_{jk}(x, \rho)]_{j, k = \overline{1, m}}$ and
$S(x, \rho) = [S_{jk}(x, \rho)]_{j, k = \overline{1, m}}$ 
be the matrix solutions of equation \eqref{eqv}
under the initial conditions
$$
  	C(0, \rho) = S'(0, \rho) = I, \quad C'(0, \rho) = S(0, \rho) = 0.
$$
The main goal of this section is to obtain the asymptotics of $S(x, \rho)$ and $C(x, \rho)$
as $|\rho| \to \infty$.

Let the matrix functions $P_{+}(x)$, $P_{-}(x)$, $P_{+}^*(x)$ and $P_{-}^*(x)$ be the solutions of
the Cauchy problems
\begin{equation} \label{cauchyP}
\begin{array}{l}
	P_{\pm}'(x) = \pm Q_1(x) P_{\pm}(x), \quad P_{\pm}(0) = I, \\
	P_{\pm}^{*'}(x) = \pm P_{\pm}^*(x) Q_1(x), \quad P_{\pm}^*(0) = I.	
\end{array}
\end{equation}

\medskip

{\it Remark.} In the scalar case $(m = 1)$, one has 
$$
 	P_{\pm}(x) = P^*_{\pm}(x) = \exp \left\{ \pm \int_0^x Q_1(t) \, dt \right\}.
$$

\medskip

\begin{lem} \label{lem:P}
The following relations hold
$$
 	P_{+}(x) P_{-}^*(x) = P_{-}^*(x) P_{+}(x) = I, \quad P_{-}(x) P_{+}^*(x) = P_{+}^*(x) P_{-}(x) = I, 
$$
for all $x \in [0, \pi]$.
\end{lem}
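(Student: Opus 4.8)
The plan is to prove the identities by differentiating the relevant products and invoking uniqueness for linear matrix Cauchy problems. Consider first the product $Z(x) := P_{-}^*(x) P_{+}(x)$. Differentiating and using \eqref{cauchyP},
\[
 Z'(x) = P_{-}^{*'}(x) P_{+}(x) + P_{-}^*(x) P_{+}'(x)
 = -P_{-}^*(x) Q_1(x) P_{+}(x) + P_{-}^*(x) Q_1(x) P_{+}(x) = 0,
\]
so $Z(x)$ is constant on $[0, \pi]$; since $Z(0) = P_{-}^*(0) P_{+}(0) = I$, we get $P_{-}^*(x) P_{+}(x) = I$ for all $x \in [0, \pi]$. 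In exactly the same way, setting $W(x) := P_{+}^*(x) P_{-}(x)$ and differentiating gives $W'(x) = P_{+}^*(x) Q_1(x) P_{-}(x) - P_{+}^*(x) Q_1(x) P_{-}(x) = 0$, whence $P_{+}^*(x) P_{-}(x) = I$.

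It remains to obtain the identities with the factors in the opposite order, i.e.\ $P_{+}(x) P_{-}^*(x) = I$ and $P_{-}(x) P_{+}^*(x) = I$. This follows from a standard linear-algebra fact: if $A$ and $B$ are square matrices of the same size over a field with $BA = I$, then $A$ is invertible with $A^{-1} = B$, so $AB = I$ as well. Applying this with $A = P_{+}(x)$, $B = P_{-}^*(x)$ (using $P_{-}^*(x) P_{+}(x) = I$ just proved) yields $P_{+}(x) P_{-}^*(x) = I$; applying it with $A = P_{-}(x)$, $B = P_{+}^*(x)$ yields $P_{-}(x) P_{+}^*(x) = I$. This completes the proof.

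The argument is essentially routine; the only point requiring a little care is that matrix multiplication is noncommutative, so one cannot simply rearrange factors, and one genuinely needs both the differentiation computation (which produces a one-sided inverse) and the linear-algebra fact (which upgrades it to a two-sided inverse). Alternatively, one could avoid the linear-algebra step by also writing down the Cauchy problems solved by $P_{+}(x) P_{-}^*(x)$ and $P_{-}(x) P_{+}^*(x)$ directly and checking they have derivative zero and value $I$ at $x = 0$, which works verbatim by the same cancellation. I do not anticipate any real obstacle here.
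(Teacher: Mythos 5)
Your proof is correct and follows the same route as the paper: differentiate $P_{-}^*(x)P_{+}(x)$, observe the cancellation, and conclude constancy; the paper handles the remaining identities with a ``similarly,'' while you make the needed $BA=I\Rightarrow AB=I$ step for square matrices explicit, which is a legitimate way to finish. One small caveat: your closing aside is inaccurate, since $(P_{+}(x)P_{-}^*(x))' = Q_1(x)P_{+}(x)P_{-}^*(x) - P_{+}(x)P_{-}^*(x)Q_1(x)$ does not vanish ``verbatim by the same cancellation''; to argue that way one would instead observe that $W\equiv I$ solves $W'=Q_1W-WQ_1$, $W(0)=I$, and invoke uniqueness for this linear Cauchy problem.
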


\begin{proof}
Using \eqref{cauchyP}, we get
$$
 	(P_{-}^*(x) P_{+}(x))' = (P_{-}^*(x))' P_{+}(x) + P_{-}^{*}(x) P_{+}'(x) =
 	- P_{-}^*(x) Q_1(x)  P_{+}(x) + P_{-}^*(x) Q_1(x)  P_{+}(x) = 0.
$$ 
Hence $P_{-}^*(x) P_{+}(x)$ does not depend on $x$, so $P_{-}^*(x) P_{+}(x) = P_{-}^*(0) P_{+}(0) = I$.
The other relations can be proved similarly.
\end{proof}

\medskip

\begin{thm} \label{thm:asympt}
For $x \in [0, \pi]$, $|\rho| \to \infty$, $\nu = 0, 1$, the following relations hold
\begin{gather} \label{asymptC}
C^{(\nu)}(x, \rho) = \frac{(i \rho)^{\nu}}{2} \exp(i \rho x) P_{-}(x) + \frac{(- i \rho)^{\nu}}{2} \exp(-i\rho x) P_{+}(x)
 + O(\rho^{\nu-1} \exp(|\tau|x)), \\ \nonumber
S^{(\nu)}(x, \rho)  = \frac{(i \rho)^{\nu - 1}}{2} \exp(i \rho x) P_{-}(x)  + \frac{(-i \rho)^{\nu - 1}}{2}
\exp(- i \rho x) P_{+}(x) + O(\rho^{\nu-2} \exp(|\tau|x)),
\end{gather}
where $\tau := \mbox{Im} \, \rho$.
\end{thm}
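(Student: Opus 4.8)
\medskip
\noindent\emph{Proof idea.} The plan is to build two matrix ``model solutions'' and to view $C$ and $S$ as controlled perturbations of their linear combinations. Put
\[
 u_{\pm}(x,\rho) := \exp(\pm i\rho x)\,P_{\mp}(x).
\]
Using \eqref{cauchyP} one computes directly
\[
 u_{\pm}'(x,\rho) = \pm\exp(\pm i\rho x)\,(i\rho I - Q_1(x))\,P_{\mp}(x), \qquad
 \ell u_{\pm} = \exp(\pm i\rho x)\,R_{\pm}(x)\,P_{\mp}(x),
\]
where $R_{\pm}(x) := Q_1^2(x) \mp Q_1'(x) + Q_0(x)$. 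By the hypotheses $Q_0 \in L_1[0,\pi]$ and $Q_1 \in W_1^1[0,\pi]$ (so $Q_1$ is bounded and $Q_1' \in L_1$) we get $R_{\pm} \in L_1[0,\pi]$, and by Lemma~\ref{lem:P} the matrices $P_{\pm}$, $P_{\pm}^*$ are bounded on $[0,\pi]$ with $P_{\mp}^{-1} = P_{\pm}^*$. Thus $u_{\pm}$ solve \eqref{eqv} up to a $\rho$-free, $L_1$-integrable right-hand side, while $\|u_{\pm}^{(\nu)}(x,\rho)\| \le C|\rho|^{\nu}\exp(\mp\tau x)$ for $|\rho|$ large and $\nu = 0,1$.

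Next, seek $Y = u_+ z_+ + u_- z_-$ subject to the usual side condition $u_+ z_+' + u_- z_-' = 0$; then $Y' = u_+' z_+ + u_-' z_-$, and $\ell Y = 0$ becomes equivalent to
\[
 \mathcal{U}(x,\rho)\begin{pmatrix} z_+' \\ z_-' \end{pmatrix}
 = \begin{pmatrix} 0 \\ -(\ell u_+)z_+ - (\ell u_-)z_- \end{pmatrix}, \qquad
 \mathcal{U} := \begin{pmatrix} u_+ & u_- \\ u_+' & u_-' \end{pmatrix}.
\]
Factoring $\mathcal{U} = \Theta(x,\rho)\cdot\mathrm{diag}\bigl(\exp(i\rho x)P_-,\ \exp(-i\rho x)P_+\bigr)$ with $\Theta = \left(\begin{smallmatrix} I & I \\ A & -A \end{smallmatrix}\right)$, $A := i\rho I - Q_1(x)$, one has for $|\rho| > \sup_x\|Q_1(x)\|$ that $\Theta$ is invertible, $\|\Theta^{-1}\| = O(1)$, $\|A^{-1}\| = O(|\rho|^{-1})$, and (using $P_{\mp}^{-1} = P_{\pm}^*$) $\mathcal{U}^{-1}$ is obtained explicitly. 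Substituting the formulas for $\ell u_{\pm}$ turns the system into $(z_+',z_-')^T = \mathcal{B}(x,\rho)(z_+,z_-)^T$, where every entry of $\mathcal{B}$ is $O(|\rho|^{-1})$ times the $L_1$-function $r(x) := \|R_+(x)\| + \|R_-(x)\|$, the off-diagonal entries carrying an extra oscillating factor $\exp(\mp 2i\rho x)$. The initial values follow from $\mathcal{U}(0,\rho) = \Theta(0,\rho)$: for $C$, $(z_+,z_-)|_{x=0} = (\tfrac12 I, \tfrac12 I)$; for $S$, $(z_+,z_-)|_{x=0} = \tfrac12(A(0)^{-1}, -A(0)^{-1}) = \tfrac{1}{2i\rho}(I,-I) + O(|\rho|^{-2})$. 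Inserting these into $u_{\pm}^{(\nu)}z_{\pm}(0,\rho)$ and absorbing the $Q_1$-terms of $u_{\pm}'$ into the remainder already yields the main terms in \eqref{asymptC}.

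It remains to show that $z_{\pm}(x,\rho) - z_{\pm}(0,\rho)$ is small. By the symmetry $\rho \mapsto -\rho$ it suffices to treat $\tau \ge 0$, so $u_+$ is the exponentially decaying mode and $u_-$ the growing one. The off-diagonal entry $\exp(-2i\rho x)(\cdots)$ driving $z_+'$ has modulus $\sim |\rho|^{-1}\exp(2\tau x)$, which is not small, so I renormalise the decaying coefficient: $\widehat z_+(x) := \exp(-2\tau x)z_+(x)$. Since $\exp(\mp 2i\rho x)\exp(\pm 2\tau x)$ has modulus $1$, the renormalised system satisfies
\[
 \|(\widehat z_+)'(x) + 2\tau\,\widehat z_+(x)\| \le C|\rho|^{-1}r(x)\bigl(\|\widehat z_+(x)\| + \|z_-(x)\|\bigr), \qquad
 \|(z_-)'(x)\| \le C|\rho|^{-1}r(x)\bigl(\|\widehat z_+(x)\| + \|z_-(x)\|\bigr),
\]
and the term $-2\tau\widehat z_+$ only helps: in the Duhamel representation it produces a factor $\exp(-2\tau(x-t)) \le 1$. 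Hence $\psi(x) := \|\widehat z_+(x)\| + \|z_-(x)\|$ obeys $\psi(x) \le \psi(0) + C|\rho|^{-1}\int_0^x r\,\psi$, so by Gronwall's inequality $\psi(x) = O(\psi(0))$ on $[0,\pi]$ for $|\rho|$ large, and then $\|z_-(x) - z_-(0)\| = O(|\rho|^{-1}\psi(0))$, $\|\widehat z_+(x) - \exp(-2\tau x)z_+(0)\| = O(|\rho|^{-1}\psi(0))$, i.e. $\|z_+(x) - z_+(0)\| = O(|\rho|^{-1}\psi(0)\exp(2\tau x))$. Finally $C^{(\nu)}(x,\rho) - (\text{main term}) = u_+^{(\nu)}(z_+ - z_+(0)) + u_-^{(\nu)}(z_- - z_-(0))$, and likewise for $S^{(\nu)}$; combining the bounds $\|u_{\pm}^{(\nu)}\| \le C|\rho|^{\nu}\exp(\mp\tau x)$ with $\psi(0) = O(1)$ for $C$ and $\psi(0) = O(|\rho|^{-1})$ for $S$, all remainders collapse to $O(\rho^{\nu-1}\exp(|\tau|x))$ for $C^{(\nu)}$ and to $O(\rho^{\nu-2}\exp(|\tau|x))$ for $S^{(\nu)}$, which is \eqref{asymptC}.

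The hard part is this last estimate: a direct Gronwall bound on $(z_+,z_-)$ diverges, because the equation for the coefficient $z_+$ of the decaying mode is fed, through the oscillating entry of $\mathcal{B}$, by a term of size $\sim |\rho|^{-1}\exp(2\tau x)\,z_-$. Separating the growing and decaying modes and renormalising the latter by $\exp(\mp 2\tau x)$ (equivalently, integrating each mode in its favourable direction) is what makes the argument close on the bounded interval $[0,\pi]$; keeping track of the $\exp(|\tau|x)$ weights when the coefficient estimates are multiplied back by $u_{\pm}$ and $u_{\pm}'$ is the other point that needs care. The remaining ingredients — uniform invertibility of $\Theta$ and $P_{\pm}$ for large $|\rho|$ (via Lemma~\ref{lem:P}) and the merely $L_1$ regularity of $R_{\pm}$ — are routine.
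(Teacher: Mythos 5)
Your argument is correct, and it rests on the same two pillars as the paper's proof: the model solutions $u_{\pm}=\exp(\pm i\rho x)P_{\mp}(x)$ generated by the Cauchy problems \eqref{cauchyP}, and Lemma~\ref{lem:P}, which supplies the inverses $P_{\mp}^{-1}=P_{\pm}^{*}$ needed to invert the Wronskian-type matrix. Where you genuinely diverge is in how the remainder is controlled. The paper rewrites \eqref{eqv} as a perturbation of the exactly solvable equation satisfied by $C_0=\tfrac12(u_++u_-)$ and $S_0=\tfrac1{2i\rho}(u_+-u_-)$, derives a matrix Volterra integral equation with kernel $\tfrac1{2i\rho}\bigl\{\exp(i\rho(x-t))P_-(x)P_+^*(t)-\exp(-i\rho(x-t))P_+(x)P_-^*(t)\bigr\}$, and closes the estimate in one step using the weighted sup-norm $\mu_\nu(\rho)=\max_x\|C^{(\nu)}(x,\rho)\exp(-|\tau|x)\|$: since $|\exp(\pm i\rho(x-t))|\le\exp(|\tau|(x-t))$ for $t\le x$, the weighted kernel is automatically $O(|\rho|^{-1})$ and no separation of growing and decaying modes is needed; the asymptotics then follow by feeding the crude bound $\mu_\nu\le K|\rho|^\nu$ back into the integral equation. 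You instead track the variation-of-parameters coefficients $z_\pm$ individually, which is why you meet the $\exp(2\tau x)$ off-diagonal term and must renormalise the decaying mode before applying Gronwall. Your fix is sound, and the final bookkeeping (including $\psi(0)=O(|\rho|^{-1})$ for $S$ and the absorption of the $Q_1$-terms of $u_\pm'$) checks out; but the difficulty you single out as ``the hard part'' is an artifact of the $(z_+,z_-)$ coordinates --- estimating the solution itself in the $\exp(-|\tau|x)$-weighted norm makes the cancellation between the oscillating kernel and the weight automatic. The two routes prove the same statement under the same hypotheses; yours is somewhat longer but makes the mode structure explicit, while the paper's is shorter and extends verbatim to both $C$ and $S$.
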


\medskip

In order to prove the theorem, we develop the ideas of \cite{But11} with necessary modifications caused by the matrix case. 

\begin{proof}
{\bf 1.} First we derive Volterra integral equations for $S(x, \rho)$ and $C(x, \rho)$.

One can easily check that the matrix functions
\begin{gather*}
 	C_0(x, \rho) = \frac{1}{2} \exp(i \rho x) P_{-}(x) + \frac{1}{2} \exp(- i \rho x) P_{+}(x), \\ 
 	S_0(x, \rho) = \frac{1}{2 i \rho} \exp(i \rho x) P_{-}(x) - \frac{1}{2 i \rho} \exp(-i \rho x) P_{+}(x)
\end{gather*}
form a fundamental system of solutions for the differential equation
$$
Y'' + Q'_1(x) (i \rho \cdot I - Q_1(x))^{-1} Y' + (\rho^2 \cdot I + 2 i \rho Q_1(x) - Q_1^2(x)) Y = 0.
$$
Rewrite equation \eqref{eqv} in the form
\begin{gather} \label{eqvrewr}
Y'' + Q'_1(x) (i \rho \cdot I - Q_1(x))^{-1} Y' + (\rho^2 \cdot I + 2 i \rho Q_1(x) - Q_1^2(x)) Y = F(x, \rho, Y), \\
\label{defF}
F(x, \rho, Y) := Q'_1(x) (i \rho \cdot I - Q_1(x))^{-1} Y' - (Q_1^2(x) + Q_0(x))Y.
\end{gather}
Apply the method of variation of parameters to this equation. Every solution of \eqref{eqvrewr} can be represented in 
the form 
$$
	Y(x, \rho) = C_0(x, \rho) A(x, \rho) + S_0(x, \rho) B(x, \rho),
$$
where coefficient matrices $A(x, \rho)$ and $B(x, \rho)$ can be found from the system
\begin{equation} \label{systemAB}
 	\left[\begin{array}{cc} C_0(x, \rho) & S_0(x, \rho) \\ C'_0(x, \rho) & S'_0(x, \rho) \end{array}\right] \cdot
 	\left[\begin{array}{c} A'(x, \rho) \\ B'(x, \rho) \end{array}\right] = 
 	\left[\begin{array}{c} 0 \\ F(x, \rho, Y) \end{array}\right]
\end{equation}
and the initial conditions 
\begin{equation} \label{initAB} 
 	A(0, \rho) = Y(0, \rho), \quad B(0, \rho) = (I - Q_1(0) / (i \rho))^{-1} Y'(0, \rho).
\end{equation}

In order to solve system \eqref{systemAB}, we introduce the matrix functions
\begin{gather*}
 	C_0^*(x, \rho) = \frac{1}{2} \exp(i \rho x) P_{-}^*(x) + \frac{1}{2} \exp(- i \rho x) P_{+}^*(x), \\ 
 	S_0^*(x, \rho) = \frac{1}{2 i \rho} \exp(i \rho x) P_{-}^*(x) - \frac{1}{2 i \rho} \exp(-i \rho x) P_{+}^*(x).
\end{gather*}
Using Lemma~\ref{lem:P}, one can easily show that
$$
 	\left[\begin{array}{cc} C_0 & S_0 \\ C'_0 & S'_0 \end{array}\right]^{-1} =
 	\left[\begin{array}{cc} S_0^{*'} & -S_0^* \\ -C_0^{*'} & C_0^* \end{array}\right] \cdot
 	\left[\begin{array}{cc} i \rho (i \rho \cdot I - Q_1)^{-1} & 0 \\ 0 & i \rho (i \rho \cdot I - Q_1)^{-1} \end{array}\right]. 
$$
Therefore
\begin{gather*}
 	A'(x, \rho) = - i \rho S_0^*(x, \rho) (i \rho \cdot I - Q_1(x))^{-1} F(x, \rho, Y),\\
 	B'(x, \rho) = i \rho C_0^*(x, \rho) (i \rho \cdot I - Q_1(x))^{-1} F(x, \rho, Y).
\end{gather*}
and
\begin{multline} \label{inteqY}
 	Y(x, \rho) = C_0(x, \rho) A(0, \rho) + S_0(x, \rho) B(0, \rho) + \\ +
 	i \rho \int_0^x (S_0(x, \rho) C_0^*(t, \rho) - C_0(x, \rho) S_0^*(t, \rho))
 	(i \rho \cdot I - Q_1(t))^{-1} F(t, \rho, Y) \, dt.
\end{multline}
Simple calculations show that
$$
 	S_0(x, \rho) C_0^*(t, \rho) - C_0(x, \rho) S_0^*(t, \rho) = \frac{1}{2 i \rho} \bigl\{ \exp(i \rho (x - t)) P_{-}(x) P_{+}^*(t)
 	- \exp(- i \rho(x - t)) P_{+}(x) P_{-}^*(t) \bigr\}.
$$
Substituting $Y = C(x, \rho)$ and $Y = S(x, \rho)$ into \eqref{initAB} and \eqref{inteqY}, we arrive at
the following integral equations
\begin{multline} \label{inteqC}
 	C(x, \rho) = C_0(x, \rho) + \frac{1}{2} \int_0^x
 	\bigl\{ \exp(i \rho (x - t)) P_{-}(x) P_{+}^*(t) - \\
 	- \exp(- i \rho(x - t)) P_{+}(x) P_{-}^*(t) \bigr\} (i \rho \cdot I - Q_1(t))^{-1} F(t, \rho, C) \, dt,
\end{multline} 
\begin{multline*}
 	S(x, \rho) = S_0(x, \rho) \left(I - \frac{Q_1(0)}{i \rho}\right)^{-1} +
 	\frac{1}{2} \int_0^x
 	\bigl\{ \exp(i \rho (x - t)) P_{-}(x) P_{+}^*(t)
 	- \\ - \exp(- i \rho(x - t)) P_{+}(x) P_{-}^*(t)\bigr\} (i \rho \cdot I - Q_1(t))^{-1} F(t, \rho, S) \, dt,
\end{multline*}
where $F(x, \rho, Y)$ is defined in \eqref{defF}.

\smallskip

{\bf 2.} Then we continue to work with $C(x, \rho)$. The function $S(x, \rho)$ can be treated similarly.
Differentiating \eqref{inteqC} with respect to $x$ and using \eqref{cauchyP}, we get
\begin{multline} \label{inteqCp}
 	C'(x, \rho) = C'_0(x, \rho) + \frac{1}{2} (i \rho \cdot I - Q_1(x)) \int_0^x
 	\bigl\{ \exp(i \rho (x - t)) P_{-}(x) P_{+}^*(t) + \\
 	+ \exp(- i \rho(x - t)) P_{+}(x) P_{-}^*(t) \bigr\} (i \rho \cdot I - Q_1(t))^{-1} F(t, \rho, C) \, dt, 		
\end{multline}
Denote
$$
 	\mu_{\nu}(\rho) := \max_{x \in [0, \pi]} \| C^{(\nu)}(x, \rho) \exp(-|\tau| x) \|, \quad \nu = 0, 1.
$$

We agree to denote by the same symbol $K$ different positive constants not depending on $x$ and $\rho$, and
we mean
by $\| . \|$ the following matrix norm: $\| A \| = \max_{j = \overline{1, m}} \sum_{k = 1}^m |a_{ij}|$,
$A = [a_{jk}]_{j, k = \overline{1, m}}$.

Since 
$$
 	\| F(t, \rho, C) \| \le K \| Q'_1(t) \| \frac{\mu_1(\rho)}{|\rho|} + \| Q_1^2(t) + Q_0(t) \| \mu_0(\rho),
$$
we get from \eqref{inteqC} and \eqref{inteqCp}
$$
 	\mu_0(\rho) \le K \left( 1 + \frac{\mu_0(\rho)}{|\rho|} + \frac{\mu_1(\rho)}{|\rho|^2}\right), \quad
 	\mu_1(\rho) \le K \left( |\rho| + \mu_0(\rho) + \frac{\mu_1(\rho)}{|\rho|}\right), 
$$
whence we obtain $\mu_{\nu}(\rho) \le K |\rho|^{\nu}$ or
$$
 	C^{(\nu)}(x, \rho) = O(|\rho|^{\nu}\exp(|\tau|x)). 
$$
Substituting this into \eqref{inteqC} and \eqref{inteqCp}, we arrive at \eqref{asymptC}.
\end{proof}

\bigskip

{\bf 3. Proof of the uniqueness theorem}

\medskip

Let $\vv(x, \rho) = [\vv_{jk}(x, \rho)]_{j, k = \overline{1, m}}$ and
$\psi(x, \rho) = [\psi_{jk}(x, \rho)]_{j, k = \overline{1, m}}$
be the matrix solutions of equation \eqref{eqv}
under the initial conditions
$$
    \vv(0, \rho) = \psi(\pi, \rho) = I, \quad U(\vv) = V(\psi) = 0.
$$
It is easy to check that
\begin{equation} \label{Phiexp}
    \Phi(x, \rho) = S(x, \rho) + \vv(x, \rho) M(\rho),
\end{equation}
\begin{equation} \label{Upsi}
    \Phi(x, \rho) = \psi(x, \rho) (U(\psi))^{-1}.
\end{equation}
 
We expand $\vv(x, \rho)$ by the fundamental system of solutions $C(x, \rho)$ and $S(x, \rho)$
$$
 	\vv(x, \rho) = C(x, \rho) - S(x, \rho) (i \rho h_1 + h_0) 
$$
and, using Theorem~\ref{thm:asympt}, we get 
\begin{multline*}
 	\vv^{(\nu)}(x, \rho) = \frac{(i \rho)^{\nu}}{2} \exp(i \rho x) P_{-}(x) (I - h_1) + 
 	\frac{(- i \rho)^{\nu}}{2} \exp(- i \rho x) P_{+}(x) (I + h_1) + \\ + O(\rho^{\nu -1} \exp(|\tau|x)), \quad
 	x \in [0, \pi], \, \nu = 0, 1,  \, |\rho| \to \infty.
\end{multline*}

In order to obtain the asymptotics for $\psi(x, \rho)$, we can apply the substitution $x \to \pi - x$,
$h_1 \to -H_1$, $h_0 \to -H_0$. Then we need the analogs of $P_{\pm}(x)$ and $P_{\pm}^*(x)$, which we denote by
$P_{\pm}^{\bullet}(x)$ and $P_{\pm}^{\bullet*}(x)$ and define as the solutions of the Cauchy problems
\begin{gather*}
 	P_{\pm}^{\bullet '}(x) = \mp Q_1(x) P_{\pm}^{\bullet}(x), \quad P_{\pm}^{\bullet}(\pi) = I, \\
 	P_{\pm}^{\bullet *'}(x) = \mp P_{\pm}^{\bullet*}(x) Q_1(x), \quad P_{\pm}^{\bullet*}(\pi) = I.
\end{gather*}

The following lemma establishes connections between these asymptotic coefficients.

\medskip

\begin{lem} \label{lem:P2}
For $x \in [0, \pi]$, the following relations hold
$$
    P_{+}(x) P_{-}^{\bu}(0) = P_{-}^{\bu}(x), \quad P_{-}(x) P_{+}^{\bu}(0) = P_{+}^{\bu}(x),
$$
$$
    P_{-}^{\bu *}(0) P^*_{+}(x) = P_{-}^{\bu *}(x), \quad P_{+}^{\bu *}(0) P^*_{-}(x) = P_{+}^{\bu *}(x).
$$
\end{lem}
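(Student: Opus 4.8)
The plan is to imitate the proof of Lemma~\ref{lem:P}: each of the four identities will follow by differentiating an appropriate product of two of the matrix functions involved, observing that the derivative vanishes identically, and then evaluating at $x = 0$ to pin down the resulting constant matrix. The crucial structural remark is that the ``$\bu$'' functions satisfy the \emph{same} linear matrix equations as the original ones, only with data prescribed at $x = \pi$ instead of at $x = 0$: comparing \eqref{cauchyP} with the defining Cauchy problems for $P^{\bu}_{\pm}$ and $P^{\bu *}_{\pm}$, one sees that $P^{\bu}_{-}$ solves $Y' = Q_1 Y$ (like $P_{+}$), $P^{\bu}_{+}$ solves $Y' = -Q_1 Y$ (like $P_{-}$), while $P^{\bu *}_{-}$ solves $Y' = Y Q_1$ (like $P^*_{+}$) and $P^{\bu *}_{+}$ solves $Y' = -Y Q_1$ (like $P^*_{-}$).

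For the first relation, I would set $G(x) := P^*_{-}(x) P^{\bu}_{-}(x)$. A direct computation using \eqref{cauchyP} together with $P^{\bu\prime}_{-} = Q_1 P^{\bu}_{-}$ shows that the two terms of $G'(x)$ cancel, so $G(x) \equiv G(0) = P^{\bu}_{-}(0)$. Left-multiplying the resulting identity $P^*_{-}(x) P^{\bu}_{-}(x) = P^{\bu}_{-}(0)$ by $P_{+}(x)$ and using $P_{+}(x) P^*_{-}(x) = I$ from Lemma~\ref{lem:P} gives $P^{\bu}_{-}(x) = P_{+}(x) P^{\bu}_{-}(0)$. The second identity is obtained in exactly the same way from $G(x) := P^*_{+}(x) P^{\bu}_{+}(x)$, the sign bookkeeping again producing $G' \equiv 0$; after evaluating at $x = 0$ one left-multiplies by $P_{-}(x)$ and uses $P_{-}(x) P^*_{+}(x) = I$.

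The two starred relations are handled symmetrically, with left and right multiplication interchanged. For the third, take $G(x) := P^{\bu *}_{-}(x) P_{-}(x)$; then $G' \equiv 0$ by \eqref{cauchyP} together with $P^{\bu *\prime}_{-} = P^{\bu *}_{-} Q_1$, hence $P^{\bu *}_{-}(x) P_{-}(x) = G(0) = P^{\bu *}_{-}(0)$, and right-multiplying by $P^*_{+}(x)$ while invoking $P_{-}(x) P^*_{+}(x) = I$ yields $P^{\bu *}_{-}(x) = P^{\bu *}_{-}(0) P^*_{+}(x)$. For the fourth, the same scheme with $G(x) := P^{\bu *}_{+}(x) P_{+}(x)$ and the identity $P_{+}(x) P^*_{-}(x) = I$ finishes the proof.

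I do not anticipate any genuine difficulty here; the only care needed is in the sign bookkeeping (the ``$\bu$'' functions carry the opposite signs $\mp$ where \eqref{cauchyP} has $\pm$), which dictates which product $G$ to form so that $G' \equiv 0$, and in selecting the correct one of the four mutual-inverse identities of Lemma~\ref{lem:P} for each inversion step.
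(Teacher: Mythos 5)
Your proof is correct: all four sign computations check out (indeed $P^{\bu}_{-}$ solves $Y'=Q_1Y$, $P^{\bu}_{+}$ solves $Y'=-Q_1Y$, etc.), each product $G$ you form has vanishing derivative, and the inversion steps invoke the right identities from Lemma~\ref{lem:P}. The paper itself omits the argument, remarking only that the proof ``based on using the corresponding Cauchy problems is trivial''; the intended shortcut is even more direct than yours: for the first identity, both $P^{\bu}_{-}(x)$ and $P_{+}(x)P^{\bu}_{-}(0)$ satisfy the same first-order linear system $Y'=Q_1Y$ and take the same value $P^{\bu}_{-}(0)$ at $x=0$, so they coincide by uniqueness of the Cauchy problem, with no need for Lemma~\ref{lem:P} at all (and similarly for the other three). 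Your route through products with zero derivative is a valid, slightly longer variant of the same idea; its only cost is the extra dependence on the mutual-inverse relations of Lemma~\ref{lem:P}, which the direct uniqueness argument avoids.
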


\medskip

The proof, based on using the corresponding Cauchy problems, is trivial.

One can obtain the following asymptotic formula
\begin{multline*}
 	\psi^{(\nu)}(x, \rho) = \frac{(- i \rho)^{\nu}}{2} \exp(i \rho (\pi - x)) P^{\bu}_{-}(x)(I + H_1) + 
 	\frac{(i \rho)^{\nu}}{2} \exp(- i \rho (\pi - x)) P^{\bu}_{+}(x)(I - H_1) + \\ +
 	O(\rho^{\nu -1} \exp(|\tau| (\pi - x))),
 	\quad x \in [0, \pi], \, \nu = 0, 1, \, |\rho| \to \infty,
\end{multline*}
and then using \eqref{Upsi}, one can derive asymptotic formulas for $\Phi(x, \rho)$ and $M(\rho)$.
	
Hereafter the asymptotics in the angles
$\Theta^{\pm}_{\delta} := \{ \rho \in \mathbb{C} \colon \de \le \pm \arg \rho \le \pi - \de \}$, $0 < \de < \pi$
will be required, so we formulate the following result.

\medskip

\begin{lem} \label{lem:asympt}
Suppose $x \in (0, \pi)$, $\nu = 0, 1$, $|\rho| \to \iy$; then

(i) for $\rho \in \Theta^+_{\de}$, we have
\begin{align*}
    \vv^{(\nu)}(x, \rho) & = \frac{(- i \rho)^{\nu}}{2} \exp(- i \rho x) P_{+}(x)(I + h_1) + O(\rho^{\nu -1} \exp(|\tau| x)), \\
    \psi^{(\nu)}(x, \rho) & = \frac{(i \rho)^{\nu}}{2} \exp(- i \rho (\pi - x)) P^{\bu}_{+}(x)(I - H_1) + O(\rho^{\nu -1} \exp(|\tau| (\pi - x))), \\
    \Phi^{(\nu)}(x, \rho) & = -(- i \rho)^{\nu -1} \exp(i \rho x) P_{+}^{\bu}(x) (P_{+}^{\bu}(0))^{-1} (I + h_1)^{-1} + O(\rho^{\nu - 2} \exp(-|\tau| x)), \\
    M(\rho) & = (i \rho)^{-1} (I + h_1)^{-1} + O(\rho^{-2}).
\end{align*}

(ii) for $\rho \in \Theta^-_{\de}$, we have
\begin{align*}
    \vv^{(\nu)}(x, \rho) & = \frac{(i \rho)^{\nu}}{2} \exp(i \rho x) P_{-}(x)(I - h_1) + O(\rho^{\nu -1} \exp(|\tau| x)), \\
    \psi^{(\nu)}(x, \rho) & = \frac{(- i \rho)^{\nu}}{2} \exp(i \rho (\pi - x)) P^{\bu}_{-}(x)(I + H_1) + O(\rho^{\nu -1} \exp(|\tau| (\pi - x))), \\
    \Phi^{(\nu)}(x, \rho) & = -(i \rho)^{\nu -1} \exp(- i \rho x) P_{-}^{\bu}(x) (P_{-}^{\bu}(0))^{-1} (I - h_1)^{-1} + O(\rho^{\nu - 2} \exp(-|\tau| x)), \\
    M(\rho) & = - (i \rho)^{-1} (I - h_1)^{-1} + O(\rho^{-2}).
\end{align*}
\end{lem}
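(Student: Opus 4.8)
The plan is to read off parts (i) and (ii) from the \emph{global} (valid for all $\rho$) asymptotics of $\vv^{(\nu)}$ and $\psi^{(\nu)}$ obtained just before the statement, together with \eqref{Upsi} and a careful asymptotic inversion of $U(\psi)$. \emph{Step 1 (formulas for $\vv$ and $\psi$).} In the angle $\Theta^+_{\de}$ one has $\tau>0$ and in fact $|\tau|\ge|\rho|\sin\de$; hence for a fixed $x\in(0,\pi)$ the factor $\exp(i\rho x)$ has modulus $\exp(-|\tau|x)$, so the corresponding term in the global expansion of $\vv^{(\nu)}$ is $O(|\rho|^{\nu}\exp(-|\tau|x))=o(\rho^{\nu-1}\exp(|\tau|x))$ and is absorbed into the remainder, which leaves precisely the stated expansion of $\vv^{(\nu)}$ in (i); likewise the term carrying $\exp(i\rho(\pi-x))$ (modulus $\exp(-|\tau|(\pi-x))$) is negligible against $O(\rho^{\nu-1}\exp(|\tau|(\pi-x)))$, which leaves the stated expansion of $\psi^{(\nu)}$. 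In $\Theta^-_{\de}$ one has $\tau<0$, so the roles of the two exponentials interchange and the same argument yields the first two expansions of (ii).

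\emph{Step 2 (formula for $\Phi$).} By \eqref{Upsi} we have $\Phi^{(\nu)}(x,\rho)=\psi^{(\nu)}(x,\rho)\,(U(\psi))^{-1}$. To analyse $U(\psi)=\psi'(0,\rho)+(i\rho h_1+h_0)\psi(0,\rho)$ we use the $\psi$-asymptotics at $x=0$ (there $\pi-x=\pi>0$, so the simplified forms of Step~1 apply): in $\Theta^+_{\de}$ this gives
$$U(\psi)=\frac{i\rho}{2}\exp(-i\rho\pi)\,(I+h_1)P^{\bu}_{+}(0)(I-H_1)+O(\exp(|\tau|\pi)).$$
Since $\det(I+h_1)\ne0$, $\det(I-H_1)\ne0$, and $P^{\bu}_{+}(0)$ is nonsingular (its inverse is $P^{\bu *}_{+}(0)$, the identity $P^{\bu *}_{+}(x)P^{\bu}_{+}(x)=I$ being proved exactly as in Lemma~\ref{lem:P}), the leading matrix is invertible, so $U(\psi)$ is invertible for all large $|\rho|$ and
$$(U(\psi))^{-1}=\frac{2}{i\rho}\exp(i\rho\pi)\,(I-H_1)^{-1}(P^{\bu}_{+}(0))^{-1}(I+h_1)^{-1}+O(\rho^{-2}\exp(-|\tau|\pi)).$$
Multiplying by the simplified $\psi^{(\nu)}(x,\rho)$, the factors $(I-H_1)(I-H_1)^{-1}$ cancel, the exponentials combine via $\exp(-i\rho(\pi-x))\exp(i\rho\pi)=\exp(i\rho x)$ — which is exactly why the remainder in the $\Phi$-formula now carries the decaying weight $\exp(-|\tau|x)$ rather than the growing one — and, keeping track of the powers of $i\rho$ and of the order of the matrix factors, one obtains the asserted expansion of $\Phi^{(\nu)}$ in (i). The case $\Theta^-_{\de}$ is entirely analogous, with $P^{\bu}_{-}$, $I-h_1$, $I+H_1$ replacing $P^{\bu}_{+}$, $I+h_1$, $I-H_1$.

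\emph{Step 3 (formula for $M$) and the main difficulty.} Since $M(\rho)=\Phi(0,\rho)$, it suffices to set $x=0$, $\nu=0$ in the $\Phi$-expansion just obtained: then $P^{\bu}_{+}(0)(P^{\bu}_{+}(0))^{-1}=I$ and $\exp(i\rho\cdot0)=1$, which gives $M(\rho)=(i\rho)^{-1}(I+h_1)^{-1}+O(\rho^{-2})$ in $\Theta^+_{\de}$, and likewise $M(\rho)=-(i\rho)^{-1}(I-h_1)^{-1}+O(\rho^{-2})$ in $\Theta^-_{\de}$. The one genuinely delicate point is the inversion in Step~2: one must propagate the $O$-remainder of $U(\psi)$ correctly through $(U(\psi))^{-1}$ — which is where the hypotheses $\det(I\pm h_1)\ne0$ and the nonsingularity of $P^{\bu}_{\pm}(0)$ are used — and, because matrix multiplication is noncommutative, keep every factor in its correct place while simultaneously tracking the exponential weights, so that after the cancellation $\exp(-i\rho(\pi-x))\exp(i\rho\pi)=\exp(i\rho x)$ all remainders emerge with the weight $\exp(-|\tau|x)$ claimed in the lemma.
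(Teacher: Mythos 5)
Your argument is correct and follows essentially the same route the paper itself sketches: restrict the global asymptotics of $\vv^{(\nu)}$ and $\psi^{(\nu)}$ to each sector $\Theta^{\pm}_{\de}$, where one exponential dominates and the other is absorbed into the remainder, then obtain $\Phi$ and $M$ from \eqref{Upsi} by asymptotically inverting $U(\psi)$, using $\det(I\pm h_1)\ne 0$, $\det(I\pm H_1)\ne 0$ and the nonsingularity of $P^{\bu}_{\pm}(0)$. One small slip in a side remark: the Lemma~\ref{lem:P}-type pairing couples opposite signs, so the inverse of $P^{\bu}_{+}(x)$ is $P^{\bu *}_{-}(x)$ rather than $P^{\bu *}_{+}(x)$ (one has $(P^{\bu *}_{-}(x)P^{\bu}_{+}(x))'=0$ with value $I$ at $x=\pi$); the nonsingularity you actually need is unaffected.
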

 
\medskip

By definition, put
$$
    \ell^* Z := Z'' + Z (\rho^2 \cdot I + 2 i \rho Q_1(x) + Q_0(x)),
$$
$$
    U^*(Z) := Z'(0) + Z(0) (i \rho h_1 + h_0),
$$
$$
    V^*(Z) := Z'(\pi) + Z(\pi) (i \rho H_1 + H_0),
$$
$$
    \langle Z, Y \rangle = Z'(x) Y(x) - Z(x) Y'(x),
$$
where $Z = [z_k]^t_{k = \overline{1, m}}$ is a row vector ($t$ is the sign for the transposition). Then
\begin{equation} \label{smEq2}
    \langle Z, Y \rangle_{|x = 0} = U^*(Z) Y(0) - Z(0) U(Y), \quad
    \langle Z, Y \rangle_{|x = \pi} = V^*(Z) Y(\pi) - Z(\pi) V(Y).    
\end{equation}
If $Y(x, \rho)$ and $Z(x, \rho)$ satisfy the equations $\ell Y(x, \rho) = 0$ and $\ell^* Z(x, \rho) = 0$
respectively, then
\begin{equation} \label{smEq1}
    \frac{d}{dx} \langle Z(x, \rho), Y(x, \rho) \rangle = 0.
\end{equation} 
Let $\vv^*(x, \rho)$, $S^*(x, \rho)$, $\psi^*(x, \rho)$ and $\Phi^*(x, \rho)$ be the matrices, satisfying 
the equation $\ell^* Z = 0$ and the conditions 
$\vv^*(0, \rho) = {S^*}'(0, \rho) = \psi^*(\pi, \rho) = U^*(\Phi^*) = I$,
$U^*(\vv^*) = S^*(0, \rho) = V^*(\psi^*) = V^*(\Phi^*) = 0$. Put $M^*(\rho) := \Phi^*(0, \rho)$.
Then
\begin{equation} \label{Phiexp*}
    \Phi^*(x, \rho) = S^*(x, \rho) + M^*(\rho) \vv^*(x, \rho) = (U^*(\psi^*))^{-1} \psi^*(x, \rho).
\end{equation}
According to \eqref{smEq1}, $\langle \Phi^*(x, \rho), \Phi(x, \rho) \rangle$ does not depend on $x$.
Using \eqref{smEq2}, we get
$$
    \langle \Phi^*(x, \rho), \Phi(x, \rho) \rangle_{|x = 0} = M(\rho) - M^*(\rho), \quad
    \langle \Phi^*(x, \rho), \Phi(x, \rho) \rangle_{|x = \pi} = 0.        
$$
Therefore, $M(\rho) \equiv M^*(\rho)$.

Using \eqref{smEq2}, one can easily show that
$$
\left[\begin{array}{ll} {\Phi^*}'(x, \rho) & -\Phi^*(x,\rho)\\ -{\vv^*}'(x,\rho) & \vv^*(x,\rho) \end{array}\right]    
\left[\begin{array}{ll} \vv(x,\rho) & \Phi(x,\rho)\\ \vv'(x,\rho) & \Phi'(x,\rho) \end{array}\right] =
\left[\begin{array}{ll} I & 0 \\ 0  & I \end{array}\right].
$$
Hence,
\begin{equation} \label{inverseform}
\left[\begin{array}{ll} \vv(x,\rho) & \Phi(x,\rho)\\ \vv'(x,\rho) & \Phi'(x,\rho) \end{array}\right]^{-1} =
\left[\begin{array}{ll} {\Phi^*}'(x, \rho) & -\Phi^*(x,\rho)\\ -{\vv^*}'(x,\rho) & \vv^*(x,\rho) \end{array}\right].    
\end{equation}

We also need the asymptotics for $\vv^*(x, \rho)$ and $\Phi^*(x, \rho)$.

\begin{lem3}
Suppose $x \in (0, \pi)$, $\nu = 0, 1$, $|\rho| \to \iy$; then

(i) for $\rho \in \Theta^+_{\de}$, we have
\begin{align*}
    {\vv^*}^{(\nu)}(x, \rho) & = \frac{(- i \rho)^{\nu}}{2} \exp(- i \rho x) (I + h_1) P^*_{+}(x) + O(\rho^{\nu -1} \exp(|\tau| x)), \\
    {\psi^*}^{(\nu)}(x, \rho) & = \frac{(i \rho)^{\nu}}{2} \exp(- i \rho (\pi - x)) (I - H_1) P^{\bu *}_{+}(x) + O(\rho^{\nu -1} \exp(|\tau| (\pi - x))), \\ 
    {\Phi^*}^{(\nu)}(x, \rho) & = -(- i \rho)^{\nu -1} \exp(i \rho x) (I + h_1)^{-1} (P_{+}^{\bu *}(0))^{-1} P_{+}^{\bu *}(x) + O(\rho^{\nu - 2} \exp(-|\tau| x)).
\end{align*}

(ii) for $\rho \in \Theta^-_{\de}$, we have
\begin{align*}
    {\vv^*}^{(\nu)}(x, \rho) & = \frac{(i \rho)^{\nu}}{2} \exp(i \rho x) (I - h_1) P^*_{-}(x) + O(\rho^{\nu -1} \exp(|\tau| x)), \\
    {\psi^*}^{(\nu)}(x, \rho) & = \frac{(- i \rho)^{\nu}}{2} \exp(i \rho (\pi - x)) (I + H_1) P^{\bu *}_{-}(x) + O(\rho^{\nu -1} \exp(|\tau| (\pi - x))), \\ 
    {\Phi^*}^{(\nu)}(x, \rho) & = -(i \rho)^{\nu -1} \exp(- i \rho x) (I - h_1)^{-1} (P_{-}^{\bu *}(0))^{-1} P_{-}^{\bu *}(x) + O(\rho^{\nu - 2} \exp(-|\tau| x)).
\end{align*}
\end{lem3}

\begin{proof}[Proof of Theorem 1.]
Consider the problems $L$ and $\tilde L$ with the Weyl matrices $M(\la) \equiv \tilde M(\la)$.
Note that according to asymptotics for $M(\la)$ of Lemma~\ref{lem:asympt}, we straightway get 
\begin{equation} \label{eqh1}
h_1 = \tilde h_1.
\end{equation}

Now we consider the block-matrix $\mathcal{P}(x,\rho)=[\mathcal{P}_{jk}(x,\rho)]_{j,k=1,2}$
defined by
\begin{equation} \label{defP}
\mathcal{P}(x,\rho) \left[ \begin{array}{ll} \tilde\vv(x,\rho) & \tilde\Phi(x,\rho)\\ \tilde\vv'(x,\rho) & \tilde\Phi'(x,\rho) \end{array}\right]
= \left[\begin{array}{ll} \vv(x,\rho) & \Phi(x,\rho)\\ \vv'(x,\rho) & \Phi'(x,\rho) \end{array}\right].             
\end{equation}
Taking \eqref{inverseform} into account, we calculate
\begin{equation} \label{Pj12}
\begin{array}{l}
\mathcal{P}_{j1}(x,\rho)=\vv^{(j-1)}(x,\rho){\tilde\Phi^{*'}}(x,\rho)-
\Phi^{(j-1)}(x,\rho){\tilde\vv^{*'}}(x,\rho), \\
\mathcal{P}_{j2}(x,\rho)=\Phi^{(j-1)}(x,\rho){\tilde\vv}^{*}(x,\rho)-
\vv^{(j-1)}(x,\rho){\tilde\Phi}^{*}(x,\rho).
\end{array}
\end{equation}

Applying Lemmas~\ref{lem:asympt} and 3$^*$ and using \eqref{eqh1}, 
for $\rho \in \Theta^+_{\de}$, $|\rho| \to \iy$, $x \in (0, \pi)$
we calculate
\begin{align*}
    \mathcal{P}_{11}(x, \rho) & = \frac{1}{2} P_{+}(x) (\tilde P^{\bu *}_{+}(0))^{-1} \tilde P^{\bu *}_{+}(x) +
    \frac{1}{2} P^{\bu}_{+}(x) (\tilde P^{*}_{+}(0))^{-1} \tilde P^{*}_{+}(x) + O(\rho^{-1}), \\
    \mathcal{P}_{12}(x, \rho) & = \frac{1}{2 i \rho} P_{+}^{\bu}(x) (P_{+}^{\bu}(0))^{-1} \tilde P_{+}^*(x) -
    \frac{1}{2 i \rho} P_{+}(x) (\tilde P_{+}^{\bu *}(0))^{-1}\tilde P_{+}^{\bu *}(x) + O(\rho^{-2}).
\end{align*}

Similarly, for $\rho \in \Theta^-_{\de}$, $|\rho| \to \iy$, $x \in (0, \pi)$
\begin{align*}
    \mathcal{P}_{11}(x, \rho) & = \frac{1}{2} P_{-}(x) (\tilde P^{\bu *}_{-}(0))^{-1} \tilde P^{\bu *}_{-}(x) +
    \frac{1}{2} P^{\bu}_{-}(x) (\tilde P^{*}_{-}(0))^{-1} \tilde P^{*}_{-}(x) + O(\rho^{-1}), \\
    \mathcal{P}_{12}(x, \rho) & = -\frac{1}{2 i \rho} P_{-}^{\bu}(x) (P_{-}^{\bu}(0))^{-1} \tilde P_{-}^*(x) +
    \frac{1}{2 i \rho} P_{-}(x) (\tilde P_{-}^{\bu *}(0))^{-1}\tilde P_{-}^{\bu *}(x) + O(\rho^{-2}).
\end{align*}

Using Lemma~\ref{lem:P2}, we obtain for $\rho \in \Theta^+_{\de} \cup \Theta^-_{\de}$, $|\rho| \to \iy$, $x \in (0, \pi)$, that
\begin{equation} \label{Pasimpt}
    \mathcal{P}_{11}(x, \rho) = \Omega(x) + O(\rho^{-1}), \quad 
    \mathcal{P}_{12}(x, \rho) = \rho^{-1} \Lambda(x) + O(\rho^{-2}),
\end{equation}
where
$$
    \Omega(x) := \frac{1}{2} \left( P_{-}(x) \tilde P^*_{+}(x) + P_{+}(x) \tilde P^*_{-}(x) \right), 
$$
$$
    \Lambda(x) := \frac{1}{2 i} \left( P_{-}(x) \tilde P^*_{+}(x) - P_{+}(x) \tilde P^*_{-}(x) \right).
$$
Substituting \eqref{Phiexp} and \eqref{Phiexp*} into \eqref{Pj12}, we get
$$
    \mathcal{P}_{11} = \vv \tilde S^{*'} - S\tilde \vv^{*'} + \vv (\tilde M^* - M) \tilde \vv^*,
$$
$$
    \mathcal{P}_{12} = S \tilde \vv^* - \vv \tilde S^{*} + \vv (M - \tilde M^*) \tilde \vv^*,    
$$
Note that for each fixed $x \in (0, \pi)$, the matrix functions $\vv$, $\vv^*$, $S$, $S^*$ and they derivatives with respect to $x$ 
are entire in $\rho$ of order $1$.
Since $\tilde M^*(\rho) \equiv \tilde M(\rho) \equiv M(\rho)$, it follows for each fixed $x \in (0, \pi)$
that the entries of $\mathcal{P}_{11}(x, \rho)$ and $\mathcal{P}_{12}(x, \rho)$ 
are entire functions in $\rho$ of order not greater than $1$.
Together with \eqref{Pasimpt} this yields $\mathcal{P}_{11}(x, \rho) \equiv \Omega(x)$, 
$\mathcal{P}_{12}(x, \rho) \equiv 0$, $\Lambda(x) \equiv 0$.

Differentiating $\Omega(x)$ and using \eqref{cauchyP}, we get
$$
    \Omega'(x) = i (\Lambda(x) \tilde Q_1(x) - Q_1(x) \Lambda(x) ) = 0
$$
for almost all $x$ in $[0, \pi]$. Therefore, $\Omega(x) \equiv \Omega(0) \equiv I$, $x \in [0, \pi]$.
Thus, $\mathcal{P}_{11}(x, \rho) = I$, $x \in (0, \pi)$. By virtue of~\eqref{defP} we have 
$\vv(x, \rho) \equiv \tilde \vv(x, \rho)$,
$\Phi(x, \rho) \equiv \tilde \Phi(x, \rho)$ and consequently, $L = \tilde L$.
\end{proof}

\medskip


\vspace{1cm}

Natalia Bondarenko

Department of Mathematics

Saratov State University

Astrakhanskaya 83, Saratov 410026, Russia

{\it bondarenkonp@info.sgu.ru}

\end{document}